\documentclass[a4paper, oneside,11 pt]{article}

\usepackage[a4paper, left=3cm, right=3cm, top=2.5cm, bottom=2.5cm]{geometry}

\usepackage[latin1]{inputenc}
\usepackage[T1]{fontenc}
\usepackage[english]{babel}

\usepackage[all]{xy}
\linespread{1.1}
\usepackage{verbatim}
\usepackage{amsmath}
\usepackage{amsthm}
\usepackage{graphicx}
\usepackage{amssymb}
\usepackage{epstopdf}
\usepackage{url}
\usepackage{amsfonts}
\usepackage{todonotes}


\newtheorem{thm}{Theorem}
\newtheorem{lem}[thm]{Lemma}

\newtheorem{prop}[thm]{Proposition}

\def\be{\begin{equation}}
\def\ee{\end{equation}}
\def\bea{\begin{eqnarray}}
\def\eea{\end{eqnarray}}

\usepackage{fancyhdr}

\newtheorem{defn}[thm]{Definition}

\pagestyle{myheadings}

\newcommand{\R}{\mathbb{R}}



\numberwithin{thm}{section}
\numberwithin{equation}{section}

\title{Exponential convergence for ultrafast diffusion equations with log-concave weights}

\begin{document}

\author{
Max Fathi\thanks{Universit\'e Paris Cit\'e and Sorbonne Universit\'e, CNRS, Laboratoire Jacques-Louis Lions and Laboratoire de Probabilit\'es, Statistique et Mod\'elisation, F-75013 Paris, France\\
and DMA, \'Ecole normale sup\'erieure, Universit\'e PSL, CNRS, 75005 Paris, France \\
and Institut Universitaire de France\\ Email: \textsf{mfathi@lpsm.paris}},
\,\,Mikaela Iacobelli\thanks{ETH Zurich. Email: \textsf{mikaela.iacobelli@math.ethz.ch}}
}

\maketitle

\begin{abstract}
We study the asymptotic behavior of a weighted ultrafast diffusion PDE on the real line, with a log-concave and log-lipschitz weight, and prove exponential convergence to equilibrium. This result goes beyond the compact setting studied in \cite{Iac1}.
This equation is motivated by the gradient flow approach to the problem of quantization of measures introduced in \cite{CGI1}.
\end{abstract}

\section{Introduction}

In recent years, the asymptotic analysis of solutions to nonlinear parabolic equations has gained significant attention, particularly in connection with gradient flows and entropy methods. See for example \cite{AGS, Jun, Ott} for relevant background. 
This paper examines convergence to equilibrium for the evolution equation  
\begin{equation}
\label{PDE:f}
\partial_t f(t,x) = -r\, \partial_x\Big( f(t,x) \,\partial_x\Big(\frac{\rho(x)}{f(t,x)^{r+1}}\Big) \Big), \quad (t,x) \in (0, \infty) \times \mathbb{R},
\end{equation}
where $r > 1$, $ \rho > 0 $, and $ f(t, \cdot) $ is a probability density on $\mathbb{R}$. When $\rho \equiv 1$, this equation belongs to the class of ultrafast diffusion equations, which have been extensively studied in \cite{Va2}.  

We focus on the range $r > 1$,\footnote{The choice for this range is due to the link with the quantization problem, as explained later, but our results are valid in the full ultrafast diffusion range $r>0.$} a regime that has received less attention due to the fact that, because of the instantaneous mass loss \cite[Theorem 3.1]{Vazquez}, solutions do not exist already in the model case $\rho \equiv 1 $. In contrast, for $r \in (-1,0) $, corresponding to the fast and very fast diffusion cases, solutions have been thoroughly analyzed in various contexts (see, e.g., \cite{Daskalopoulos1997, Carrillo2002, Bonforte2006, Bonforte2007, Blanchet2009, Salvarani2009, Bonforte2010, Bonforte2017a, Bonforte2017b}).  

To ensure well-posedness in our setting, it is crucial that $ \rho $ has finite mass and satisfies appropriate decay conditions. Our work explores these assumptions and their role in the existence and properties of solutions.

As discussed in \cite{Iac1,Iac2,Golse2018}, this equation is motivated by the \emph{quantization problem}. Based on the results in \cite{CGI1, GL, IPS, Iac1}, the long-time behavior of solutions is expected to be
$$
f(t,x) \longrightarrow m(x) := \gamma \rho^{1/(r+1)}(x) \quad \text{as } t \to \infty, \quad \text{where } \gamma := \frac{1}{\int_0^1 \rho(y)^{1/(r+1)} \, dy}.
$$
The convergence for the problem on $[0,1]$ with Neumann boundary conditions was shown in \cite{CGI1} under the assumption that $\rho$ is close to 1 in $C^2$, and extended in \cite{Iac1} under weaker assumptions. Here, we study the same problem on the whole line $\mathbb{R}$, under strong decay assumptions on $\rho$.

As in previous works, our analysis begins with observing that the equation above is the gradient flow of the functional
$$
\mathcal{F}_\rho[f] := \int \frac{\rho(x)}{f(x)^r} \, dx,
$$
with respect to the $W_2$ distance. Setting $$V := -\frac{1}{r+1} \log \rho-\log\gamma$$
we have $\rho = \gamma^{-(r+1)}e^{-(r+1)V}$ and the minimizing measure for $\mathcal{F}_\rho$ is
$m = e^{-V}.$

We consider initial data $f_0$ comparable to $m$, i.e., $f_0 \in \mathcal{P}_{c,C} := \{g \in \mathcal{P}(\mathbb{R}); \, c m \leq g \leq C m \}$ for some constants $0 < c < C$. 
Under this assumption, the existence and uniqueness of a solution $f(t)$ that belongs to $\mathcal{P}_{c,C}$ for all times can be proved thanks to \cite{IPS} and an approximation argument (see Appendix~\ref{app:A}). Thus, our focus is on the asymptotic behavior of solutions.
Our main result is the following:

\begin{thm}\label{thm:1}
Assume that $V$ is convex and $L$-Lipschitz, $V'$ is $L$-Lipschitz, that $m$ has unit mass, and that $f_0 \in \mathcal{P}_{c,C}$. Then there exist constants $A, a > 0$, depending only on $V$,  $c$, and $C$, such that
$$
\int \bigg| \frac{f(t)}{m} - 1 \bigg|^2 m \, dx \leq A e^{-a t}.
$$
\end{thm}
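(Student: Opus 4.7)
The natural object to track is the ratio $h := f/m$, which by the well-posedness result of Appendix~\ref{app:A} stays in $[c,C]$ pointwise for all $t \ge 0$. Using $\rho = \gamma^{-(r+1)} e^{-(r+1)V}$ and $m = e^{-V}$, a direct computation produces the algebraic simplification $\rho/f^{r+1} = \gamma^{-(r+1)} h^{-(r+1)}$, so that the equation rewrites purely in terms of $h$ against the reference measure $m$:
$$
m\, \partial_t h \;=\; r(r+1)\gamma^{-(r+1)}\, \partial_x\bigl( m\, h^{-(r+1)}\, \partial_x h\bigr).
$$
This is the key observation: the weight $\rho$ disappears from the evolution equation for $h$. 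Multiplying by $2(h-1)$ and integrating by parts (boundary contributions vanishing thanks to the decay of $m$ at infinity) yields the energy identity
$$
\frac{d}{dt}\int (h-1)^2 m\, dx \;=\; -2 r(r+1)\gamma^{-(r+1)} \int m\, h^{-(r+1)}\, (\partial_x h)^2 \, dx.
$$

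Next, since $h \le C$ pointwise, one has $h^{-(r+1)} \ge C^{-(r+1)}$, so the dissipation is bounded below by $2\kappa \int (\partial_x h)^2 m\, dx$ with $\kappa := r(r+1)\gamma^{-(r+1)} C^{-(r+1)}$. The final ingredient is a Poincar\'e inequality for $m$: as $V$ is convex, $m = e^{-V}$ is a log-concave probability density on $\mathbb{R}$, hence (by Bobkov's theorem, or by the Muckenhoupt criterion in one dimension) satisfies
$$
\int \Bigl(h - \int h\, m\Bigr)^2 m\, dx \;\le\; C_P \int (\partial_x h)^2 m\, dx
$$
for some constant $C_P = C_P(V) > 0$. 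Since mass conservation $\int h\, m = 1$ is preserved by the flow, combining the two estimates gives $\frac{d}{dt}\|h-1\|_{L^2(m)}^2 \le -a \|h-1\|_{L^2(m)}^2$ with $a = 2\kappa/C_P$, and Gr\"onwall's lemma yields
$$
\int \bigl(h(t) - 1\bigr)^2 m\, dx \;\le\; \|h_0-1\|_{L^2(m)}^2\, e^{-a t},
$$
with $A := \max\bigl((C-1)^2, (1-c)^2\bigr)$ bounding the initial quantity.

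The main obstacle is not the computation itself but its rigorous justification, since the solutions constructed in Appendix~\ref{app:A} need not be classical enough to perform the chain rule and integration by parts directly. I would address this by carrying out the energy argument on smooth approximants produced in the existence construction (where all manipulations are legitimate), and then passing to the limit. The uniform bounds $c \le h_\e \le C$ keep the dissipation coercive at the approximation level, and the Lipschitz hypotheses on $V$ and $V'$ supply the parabolic regularity that makes this limiting procedure work. Note that only the convexity of $V$ is used in the final Poincar\'e step; the Lipschitz assumptions enter both in the approximation scheme and in ensuring that $C_P$ depends only on $V$.
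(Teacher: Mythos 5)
Your computation is correct, and the route is genuinely different from --- and considerably more elementary than --- the paper's. The paper never rewrites the equation as a self-contained parabolic equation for $u=f/m$; it works with the $W_2$-gradient-flow structure, proving the entropy--entropy-production inequality \eqref{EPE} by combining the dissipation of $\mathcal{F}_\rho$, an HWI-type inequality \eqref{hwi} coming from $(-\lambda)$-convexity of $\mathcal{F}_\rho$ along the $W_2$-geodesic between $f$ and $m$ (Proposition~\ref{prop_convexity}), and the comparison of $W_2$ with $H^{-1}(m)$ plus the Poincar\'e inequality (Lemma~\ref{lem_local_wi}); Gronwall is then applied to $\mathcal{F}_\rho[f]-\mathcal{F}_\rho[m]$, which is comparable to the $L^2(m)$ distance on $\mathcal{P}_{c,C}$. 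The Lipschitz assumptions on $V$ and $V'$ are spent precisely on showing that the geodesic stays in some $\mathcal{P}_{c',C'}$, via the two-sided Lipschitz bound on the one-dimensional optimal map (Lemma~\ref{lip_TO_1d}) and the bounded-displacement estimate (Lemma~\ref{bounded_displacement_TO_1d}). Your observation that $\rho/f^{r+1}=\gamma^{-(r+1)}u^{-(r+1)}$, so that the PDE becomes $m\,\partial_t u = r(r+1)\gamma^{-(r+1)}\partial_x\bigl(m\,u^{-(r+1)}\partial_x u\bigr)$ with a diffusion coefficient pinned between $C^{-(r+1)}$ and $c^{-(r+1)}$, bypasses all of that machinery: the plain $L^2(m)$ energy estimate plus the Poincar\'e inequality for $m$ (which only uses convexity of $V$, via Bobkov) closes a Gronwall inequality for exactly the quantity in the statement. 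What each approach buys: yours is shorter, works directly on the $L^2$ quantity, and its decay mechanism needs only the $L^\infty$ bounds on $u$ and a Poincar\'e inequality for $m$; the paper's stays within the variational framework of the earlier quantization literature, controls the energy $\mathcal{F}_\rho$ itself, and produces intermediate results (transport-map regularity, geodesic semi-convexity) of independent interest. The one piece you must still write out is the one you flag: justifying the energy identity for the solutions of Appendix~\ref{app:A}. Running the estimate on the Neumann approximants on $[-k,k]$ is the right move (the no-flux condition kills the boundary terms and mass conservation gives $\int u_k m_k\,dx=1$); note that the needed regularity there comes from uniform parabolicity on the compact interval, not from the Lipschitz hypotheses as you suggest, and you should also check that the Poincar\'e constants of $m_k$ are bounded uniformly in $k$ (they are, by Bobkov's variance bound, since the variances of $m_k$ are uniformly controlled by the at-least-linear growth of $V$) before passing to the limit by Fatou/lower semicontinuity; alternatively, a cutoff argument for the limiting solution works because $\int u|\partial_x u^{-(r+1)}|^2 m\,dx$ is locally integrable in time.
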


This is a non-compact analogue to \cite[Theorem 1.1]{Iac1}. The values of $A$ and $a$ could be made explicit, and $a$ has a polynomial dependence on $c$ and $C$. Our result disappointingly does not include the case of a Gaussian invariant measure, even though this measure has better concentration properties than those considered here, and should in some sense be closer to the compactly-supported case studied in \cite{Iac1}. It would be interesting to relax our assumptions, and in particular remove the Lipschitz assumption on $V$, so as to cover the Gaussian setting. 

The proof will be based on the gradient flow structure of the PDE, a local convexity estimate for $\mathcal{F}_\rho$ with respect to the Wasserstein distance, and some boundedness properties of Wasserstein geodesics. It is this last ingredient that is specific to dimension one, and our proof shall rely on certain Lipschitz regularity estimates for optimal transport maps, that are expected to fail in higher dimension (with known counterexamples when $V(x) = |x|^2$). It is natural to expect exponential convergence to equilibrium in higher dimension and for small enough perturbations of the invariant measure, but a proof would require some new ingredients.

\section{Proofs}
\subsection{Proof of Theorem~\ref{thm:1}}

Defining $u=\frac{f}{m}$, the dissipation of $\mathcal{F}_\rho$ along the flow is given by 
$$
\frac{d}{dt} \mathcal{F}_\rho[f] = -r^2 \int u |\partial_x u^{-(r+1)}|^2 m =: -I_\rho[f].
$$
The goal is to show the existence of a constant $A(c, C)$ (which shall depend on $V$) such that 
\begin{equation} \label{EPE}
\mathcal{F}_\rho[f] - \mathcal{F}_\rho[m] \leq A(c, C) I_\rho[f], \quad \forall f \in \mathcal{P}_{c,C}.
\end{equation}
Given this inequality, applying Gronwall's lemma yields exponential convergence of $\mathcal{F}_\rho[f] - \mathcal{F}_\rho[m]$ to zero. As discussed in \cite[Section 3]{Iac1}, $\mathcal{F}_\rho[f] - \mathcal{F}_\rho[m]$ is comparable to the $L^2$ distance on $\mathcal{P}_{c,C}$:
$$
\mathcal{F}_\rho[f] - \mathcal{F}_\rho[m] \sim \int \bigg|\frac{f}{m} - 1\bigg|^2 m \, dx, \quad \forall f \in \mathcal{P}_{c,C}.
$$
Thus, proving \eqref{EPE} yields exponential convergence to equilibrium in the $L^2$ sense.

Linearizing \eqref{EPE} for small perturbations of $m$ reveals that a Poincar\'e inequality for $m$ is a necessary condition for \eqref{EPE} to hold.

\begin{defn}
The probability measure $m$ satisfies a Poincar\'e inequality with constant $C_P$:
$$
\operatorname{Var}_m(g) := \int g^2 m \, dx - \left(\int g m \, dx\right)^2 \leq C_P \int |\nabla g|^2 m \, dx, \quad \forall g.
$$
\end{defn}

Since $V$ is assumed to be convex, it follows from \cite{Bobkov1999} that $m=e^{-V}$ satisfies a Poincar\'e inequality for some constant $C_P>0$ (which can be bounded using only the variance of $m$). Of course, many non-convex potentials also satisfy a Poincar\'e inequality, and our convexity assumption shall also be used elsewhere.

We shall combine a (local) semi-convexity property of $\mathcal{F}_\rho$ on $\mathcal{P}_{c,C}$ with respect to the Wasserstein structure and the Poincar\'e inequality to derive \eqref{EPE}.

\begin{prop} \label{prop_convexity}
Let $\rho = e^{-(r+1)V}$ and $0 < c < C < \infty$. Assume that $V$ is convex, $L$-Lipschitz, $V'$ is $L$-Lipschitz, and that $f \in \mathcal{P}_{c,C}$. Then there exists a (computable) constant $\lambda$ (depending on $V$, $c$, and $C$) such that $\mathcal{F}_\rho$ is $(-\lambda)$-convex along the $W_2$-geodesic linking $m$ and $f$. 
\end{prop}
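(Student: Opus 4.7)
The plan is to parametrize the Wasserstein geodesic explicitly via the one-dimensional optimal transport map and compute $\frac{d^2}{ds^2}\mathcal{F}_\rho[f_s]$ directly. Let $T = F_f^{-1}\circ F_m$ be the monotone transport from $m$ to $f$, set $w := T - \Id$, $T_s := (1-s)\Id + sT$, and $f_s := (T_s)_\# m$. The 1D identity $f_s(T_s(x))T_s'(x) = m(x)$ together with a change of variables rewrites
\begin{equation*}
\phi(s) := \mathcal{F}_\rho[f_s] = \int e^{-(r+1)V(T_s(x))}\, T_s'(x)^{r+1}\, m(x)^{-r}\, dx.
\end{equation*}

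Next I would compute $\phi''(s)$ by differentiating the integrand pointwise. Writing $g(x,s) := e^{-(r+1)V(T_s)}T_s'^{r+1}m^{-r}$ and using $\partial_s^2 g = g[(\partial_s \log g)^2 + \partial_s^2 \log g]$, a direct calculation followed by completing the square in $w'/T_s'$ yields
\begin{align*}
\frac{\partial_s^2 g}{g} = {}& r(r+1)\left(\frac{w'}{T_s'} - \frac{r+1}{r}V'(T_s)\, w\right)^2 \\
& - \frac{(r+1)^2}{r}(V'(T_s))^2 w^2 - (r+1)\, V''(T_s)\, w^2.
\end{align*}
The first term is nonnegative and may be discarded. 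The hypotheses $|V'| \leq L$ and $|V''| \leq L$ (the latter being the $L$-Lipschitz property of $V'$) bound the remaining two terms pointwise from below by $-Kw(x)^2$ with $K := (r+1)^2L^2/r + (r+1)L$. Therefore $\phi''(s) \geq -K \int w(x)^2 g(x,s)\, dx$.

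To upgrade this into a $(-\lambda)$-convexity estimate with respect to $W_2^2(m,f) = \int w^2 m\, dx$, it suffices to show a uniform bound $g(\cdot,s) \leq \kappa m$ with $\kappa = \kappa(V,c,C,r)$. Using $\rho = \gamma^{-(r+1)}m^{r+1}$ one has $g/m = \gamma^{-(r+1)}e^{-(r+1)(V(T_s)-V)}(T_s')^{r+1}$, so the bound reduces to controlling $|V(T_s)-V|$ and $T_s'$ uniformly. Both follow from a single ingredient: a uniform bound $|T(x)-x| \leq M$. Indeed, $|V(T_s)-V| \leq L|T_s-\Id| \leq LM$; and the 1D monotone transport identity $T'(x) = m(x)/f(T(x))$ combined with $f \in \mathcal{P}_{c,C}$ gives $T'(x) \in [e^{-LM}/C,\, e^{LM}/c]$, whence $T_s' = 1-s+sT'$ is also bounded.

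The principal obstacle is establishing the uniform bound $|T-\Id| \leq M$; this is the 1D-specific step alluded to in the introduction. From $F_f \in [cF_m, CF_m]$ one immediately gets $T(x) \in [F_m^{-1}(F_m(x)/C), F_m^{-1}(F_m(x)/c)]$, so the matter reduces to showing that $F_m^{-1}(\theta F_m(x))-x$ is bounded uniformly in $x$ for any fixed $\theta$ in a compact subset of $(0,\infty)$. The convexity of $V$ (log-concavity of $m$) together with the Lipschitz assumption on $V$---which, combined with $\int m = 1$, forces the asymptotic slopes $V'(\pm\infty)$ to be nonzero and hence pins down the exponential decay rate of both tails of $m$---produces this bound. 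Combining with the second-derivative estimate gives the proposition with $\lambda = K\kappa$.
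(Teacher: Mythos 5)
Your overall strategy is correct in outline and genuinely different in organization from the paper's: you compute $\frac{d^2}{ds^2}\mathcal{F}_\rho[f_s]$ directly in Lagrangian coordinates, and your formula for $\partial_s^2 g/g$ (after completing the square) is right; discarding the nonnegative square and using $|V'|\le L$, $0\le V''\le L$ correctly reduces the proposition to the two uniform bounds $|T-\Id|\le M$ and $g(\cdot,s)\le\kappa\, m$, and your derivation of the latter (together with the two-sided bound on $T'$ via $T'=m/(f\circ T)$) from the former is fine. This would replace the paper's route, which first proves a pointwise $W_2$-Hessian lower bound on $\mathcal{P}_{c',C'}$ (Proposition \ref{prop:convex}) and then shows the geodesic stays in $\mathcal{P}_{c',C'}$, using a Lipschitz bound on $T$ obtained from an isoperimetric-profile comparison (Lemmas \ref{lem_ratio_isop} and \ref{lip_TO_1d}) and then the Monge--Amp\`ere equation for the displacement bound.

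However, the one step you leave essentially unproved --- the uniform displacement bound $|T-\Id|\le M$, which you rightly single out as the crux --- is reduced incorrectly. From $cF_m\le F_f\le CF_m$ and $F_f(T(x))=F_m(x)$ you only get $F_m(x)/C\le F_m(T(x))\le F_m(x)/c$, and the quantity $F_m^{-1}(\theta F_m(x))-x$ is \emph{not} uniformly bounded in $x$ for fixed $\theta$: for $\theta=1/c>1$ it is undefined (effectively $+\infty$) as soon as $F_m(x)\ge c$, and for $\theta=1/C<1$ it tends to $-\infty$ as $x\to+\infty$. So the CDF comparison controls the displacement only in the left tail. To control the right tail you must instead use the survival functions, $c\,(1-F_m)\le 1-F_f\le C\,(1-F_m)$, so that $1-F_m(T(x))$ is comparable to $1-F_m(x)$, and combine this with the two-sided bound $m\asymp\min(F_m,1-F_m)$ --- which is precisely the paper's Lemma \ref{lem_ratio_isop}, and is where convexity, the Lipschitz bound on $V$, and $\int e^{-V}=1$ (forcing nonzero asymptotic slopes) enter --- i.e.\ with the fact that $-\log(1-F_m)$ has derivative bounded between two positive constants in the right tail; this yields $|T(x)-x|\le M$ there, and symmetrically on the left using $F_m$. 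With that correction your argument closes, and it is arguably more elementary than the paper's, since it bypasses the isoperimetric-profile Lipschitz estimate for $T$ and never needs the upper bound $f_s\le C'm$ along the geodesic.
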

We refer to Appendix \ref{appendix_ot} for a brief introduction to optimal transport geometry, where the notion of Wasserstein geodesic is explained. Note that this statement is not simply a Hessian lower bound within $\mathcal{P}_{c,C}$, since this subspace is not necessarily geodesically convex for the $W_2$ structure. 

The easiest setting for using convexity estimates to prove convergence to equilibrium is when the convexity parameter is non-negative. This is not the case here, so we must also use other ingredients. Assuming the validity of Proposition \ref{prop_convexity} and 
using the convexity property, we obtain the inequality
\begin{equation} \label{hwi}
\mathcal{F}_\rho[f] - \mathcal{F}_\rho[m] \leq W_2(f, m) \sqrt{I_\rho[f]} + \frac{\lambda}{2} W_2(f,m)^2.
\end{equation}
This is analogous to the inequality $f(x) - f(y) \leq \nabla f(x) \cdot (x-y) + \frac{\lambda}{2} |x-y|^2$ for $(-\lambda)$-convex functions in $\mathbb{R}^d$.
See \cite[Corollary 20.13]{Vil}. The proof there is under a global semi-convexity assumptions, and not just along a given geodesic, but the important property for establishing an HWI inequality between two given measures is that the convexity holds along a Wasserstein geodesic linking those two measures. 

We then use the following lemma: 
\begin{lem} \label{lem_local_wi}
Assume $f \in \mathcal{P}_{c,C}$. Then 
$$
W_2^2(f,m) \leq \frac{4 C_P^2 C^{2r+1}}{r^2 (r+1)^2} I_\rho[f].
$$
\end{lem}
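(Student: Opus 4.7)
The plan is to bound $W_2^2(f,m)$ by chaining four classical estimates: a Peyre-type bound relating $W_2^2$ to an $\dot H^{-1}$-norm of $f-m$, the Poincar\'e inequality for $m$ in both primal and dual form, and the pointwise bound $u:=f/m\leq C$ relating $\int(\partial_x u)^2 m\,dx$ to $I_\rho[f]$.

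More precisely, I would proceed as follows. First, using the linear interpolation $\mu_s=(1-s)m+sf$ and the Benamou--Brenier action in one dimension, $W_2^2(f,m)\leq \int_0^1\int(F_m-F_f)^2/\mu_s\,dx\,ds$, where $F_m, F_f$ denote the cumulative distribution functions. Using the pointwise lower bound $\mu_s\geq cm$ (which follows from $f\geq cm$), one obtains in 1D that $W_2^2(f,m)\leq K_1\,\|f-m\|^2_{\dot H^{-1}(m)}$ for an appropriate constant $K_1$. Second, by the duality of the Poincar\'e inequality for $m$ (applicable since $\int(u-1)\,m\,dx=0$), $\|f-m\|^2_{\dot H^{-1}(m)} \leq C_P\int(u-1)^2 m\,dx$. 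Third, Poincar\'e applied to $u-1$ (which has zero $m$-mean) gives $\int(u-1)^2 m\,dx\leq C_P\int(\partial_x u)^2\,m\,dx$. Finally, writing $\partial_x u^{-(r+1)}=-(r+1)u^{-(r+2)}\partial_x u$ and using $u\leq C$,
\[ I_\rho[f] = r^2(r+1)^2\int u^{-(2r+3)}(\partial_x u)^2\,m\,dx \;\geq\; \frac{r^2(r+1)^2}{C^{2r+3}}\int(\partial_x u)^2\,m\,dx. \]

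Combining these four estimates yields a bound of the form $W_2^2\leq K\,C_P^2\, C^{2r+k}/(r^2(r+1)^2)\,I_\rho[f]$ for some $k\geq 2$ and a numerical constant $K$. The main obstacle will be to reach the sharp exponent $C^{2r+1}$ and the leading constant $4$ stated in the lemma: the naive bookkeeping above produces $C^{2r+2}$ or $C^{2r+3}$, and saving the remaining powers of $C$ likely requires either a sharper application of Peyre's inequality with a cleverly chosen reference measure (such as $(m+f)/2$ in place of $Cm$), or a more direct Hardy-type bound on $\int(F_m-F_f)^2/m$ that bypasses the intermediate step of passing through $\int(u-1)^2 m$. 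The 1D structure is essential throughout, notably through the identity $\|f-m\|^2_{\dot H^{-1}(m)} = \int(F_m-F_f)^2/m\,dx$.
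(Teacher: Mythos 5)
Your proposal follows essentially the same route as the paper: Peyr\'e's comparison between $W_2$ and the weighted $H^{-1}(m)$ norm, the dual form of the Poincar\'e inequality to pass to $\operatorname{Var}_m(u)$, Poincar\'e again to reach $\int |\partial_x u|^2 m\,dx$, and the pointwise bound $u\le C$ to compare with $I_\rho[f]$. The only cosmetic difference is that the paper invokes Peyr\'e's theorem directly, $W_2(f,m)\le 2\|u-1\|_{H^{-1}(m)}$, which is where the factor $4$ comes from, whereas your hand-rolled Benamou--Brenier estimate with $\mu_s\ge c\,m$ would give $1/c$ instead; citing \cite{Pey} removes that loss.

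Concerning the ``obstacle'' you flag about the exponent: your bookkeeping is in fact the correct one. The paper's own proof ends with
$$
W_2^2(f,m) \le \frac{4 C_P^2 C^{2r+3}}{r^2(r+1)^2}\, I_\rho[f],
$$
i.e.\ exponent $2r+3$, exactly as you computed from $I_\rho[f]=r^2(r+1)^2\int u^{-(2r+3)}|\partial_x u|^2 m\,dx$, and the subsequent combination with \eqref{hwi} uses $C^{r+3/2}$, again consistent with $2r+3$. The $C^{2r+1}$ appearing in the statement of Lemma~\ref{lem_local_wi} is inconsistent with the proof that follows it (and, since $C\ge 1$, would be a strictly stronger claim); it is evidently a typo rather than something you need a sharper argument to reach, so there is no need for the refined Peyr\'e or Hardy-type variants you suggest.
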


\begin{proof}
From \cite[Theorem 1]{Pey}, we know $W_2(f, m) \leq 2 \| u-1 \|_{H^{-1}(m)}$, where $u=f/m$. Moreover, 
\begin{align*}
\| u-1 \|_{H^{-1}(m)} &= \sup \bigg\{\int h(u-1) m \, dx \, \big| \, \int |\nabla h|^2 m \, dx \leq 1 \bigg\} \\
&\leq \sup_{\| \nabla h \|_{L^2(m)} \leq 1} \operatorname{Var}_m(h)^{1/2} \operatorname{Var}_m(u)^{1/2} \\
&\leq C_P \bigg(\int |\nabla u|^2 m \, dx\bigg)^{1/2}.
\end{align*}
Assuming $0 < c \leq u \leq C$, we estimate
$$
\int |\nabla u|^2 m \, dx \leq \frac{C^{2r+3}}{(r+1)^2} \int u |\nabla(u^{-(r+1)})|^2 m \, dx = \frac{C^{2r+3}}{r^2(r+1)^2} I_\rho[f].
$$
Thus, for $f \in \mathcal{P}_{c,C}$, we obtain
$$
W_2^2(f,m) \leq \frac{4 C_P^2 C^{2r+3}}{r^2 (r+1)^2} I_\rho[f].
$$
\end{proof}
Combining Lemma~\ref{lem_local_wi} with \eqref{hwi}, we have
$$
\mathcal{F}_\rho[f] - \mathcal{F}_\rho[m] \leq \bigg(\frac{2C_P C^{r+3/2}}{r(r+1)} + \frac{2\lambda C_P^2 C^{2r+3}}{r^2 (r+1)^2}\bigg) I_\rho[f], \quad \forall f \in \mathcal{P}_{c,C}.
$$
This implies exponential decay of $\mathcal{F}_\rho[f] - \mathcal{F}_\rho[m]$ for $f \in \mathcal{P}_{c,C}$ with a rate depending on $c$, $C$ and $\lambda$, as well as $V$ through $C_P$, concluding the proof of \eqref{EPE}.

To conclude, it remains to prove Proposition~\ref{prop_convexity}, which establishes the semi-convexity property of $\mathcal{F}_\rho$.

\subsection{Proof of Proposition~\ref{prop_convexity}}

\begin{prop}
\label{prop:convex}
Assume that $V$ is convex and $L$-Lipschitz, and that $V'$ is $L$-Lipschitz. If $f \in \mathcal{P}_{c,C}$, then the $W_2$-Hessian of $\mathcal{F}_\rho$ at $f$ is bounded from below by a constant $-\lambda$, which depends only on $c$, $C$, and $V$.
\end{prop}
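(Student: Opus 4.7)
The plan is to exploit the one-dimensional Lagrangian (quantile) parametrization, in which Wasserstein geodesics become straight lines in $L^2([0,1])$ and the $W_2$-Hessian reduces to the ordinary $L^2$-Hessian of the induced functional of the quantile function (no Christoffel correction is needed, as the quantile space is flat). Writing $Q = F_f^{-1}$ for the quantile function of $f$ and applying the change of variables $y = Q(x)$, one has
$$\mathcal{F}_\rho[f] = \int_0^1 \rho(Q(x))\, Q'(x)^{r+1}\, dx,$$
and the geodesic issuing from $f$ with Otto velocity $v$ takes the form $Q_s = Q + s\Phi$ with $\Phi(x) := v(Q(x))$ and $\Phi'(x) = v'(y)/f(y)$.

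Next I would differentiate $s \mapsto \int \rho(Q + s\Phi)(Q' + s\Phi')^{r+1}\, dx$ twice at $s = 0$, substitute $\rho = e^{-(r+1)V}$ (so that $\rho' = -(r+1)V'\rho$ and $\rho'' = (r+1)\rho[(r+1)(V')^2 - V'']$), and return to Eulerian coordinates. Completing the square, I expect the compact expression
$$\Hess_f\, \mathcal{F}_\rho[\Phi,\Phi] \;=\; (r+1)\int \rho(y)\, f(y)^{-r}\, \Bigl[(r+1)(V'v - v')^2 - (v')^2 - V''\, v^2\Bigr]\, dy.$$

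Then I would bound the integrand pointwise from below. The derivative term $-(v')^2$ is the obstacle: it is not controlled by $v^2 f$ alone. The key observation is that it is coupled to the cross-term $-2(r+1)V' v v'$, and together they complete the single square $(r+1)(V'v - v')^2$. The elementary inequality
$$(r+1)(a - b)^2 - b^2 \;\geq\; -\tfrac{r+1}{r}\, a^2 \qquad (a, b \in \R),$$
applied with $a = V'v$ and $b = v'$, together with the hypotheses $|V'| \leq L$ and $V'' \leq L$, yields the pointwise estimate
$$(r+1)(V'v - v')^2 - (v')^2 - V'' v^2 \;\geq\; -\Bigl(\tfrac{r+1}{r} L^2 + L\Bigr)\, v^2.$$

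To conclude, I match the resulting integral against the tangent-space norm $\|v\|^2_{T_f W_2} = \int v^2 f\, dy$. Using $f \geq c m$ and $\rho = \gamma^{-(r+1)} m^{r+1}$ one obtains $\rho\, f^{-r} \leq (c\gamma)^{-(r+1)}\, f$, and therefore $\Hess_f \mathcal{F}_\rho[\Phi,\Phi] \geq -\lambda\, \|v\|^2_{T_f W_2}$ with $\lambda$ depending explicitly on $r$, $L$, $c$ and $V$ (through the normalization constant $\gamma$). The main structural obstacle is the $-(v')^2$ term: absorbing it into $(r+1)(V'v - v')^2$ genuinely requires the pointwise bound on $V'$, which is precisely why the Gaussian setting (with linear, unbounded $V'$) lies outside the reach of this argument, as remarked in the introduction.
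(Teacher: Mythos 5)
Your proposal is correct, and its decisive step coincides with the paper's: after substituting $\rho = e^{-(r+1)V}$, the dangerous cross term $-2(r+1)V'vv'$ is absorbed so as to cancel the $(v')^2$ contribution, which is exactly the paper's application of Young's inequality (your completion of the square, minimized over $b = v'$, produces the same residual coefficient $\tfrac{r+1}{r}(V')^2$), and the hypotheses $|V'|\le L$, $0\le V''\le L$ and $f\ge c\,m$ are then used in the same way. The one genuine difference is upstream: the paper imports the quadratic form \eqref{eq_hessian_bound} from the Otto-calculus computation of \cite{Iac1}, whereas you rederive it through the one-dimensional quantile parametrization, using that $W_2$-geodesics are segments $Q+s\Phi$ in $L^2(0,1)$ with $\Phi = v\circ Q$; your Lagrangian second derivative, rewritten in Eulerian variables with $v=\phi'$, is precisely \eqref{eq_hessian_bound}, so this is a legitimate self-contained substitute for that citation, and it is exactly the kind of argument that is special to dimension one. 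A point of bookkeeping only: you work consistently with $\rho=\gamma^{-(r+1)}e^{-(r+1)V}$ as in the introduction, while the paper's proof computes with $\rho=e^{-V}$, so your $\lambda=(r+1)(c\gamma)^{-(r+1)}\big(\tfrac{r+1}{r}L^2+L\big)$ differs from the paper's $c^{-(r+1)}\big(\tfrac{L^2}{r}+L\big)$ only through these normalizations; in either form $\lambda$ depends only on $c$, $r$ and $V$, which is all the statement requires.
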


\begin{proof}
We start from the computations in \cite[Section 4.1]{Iac1} and aim to show that, for any test function $\phi$, we have
\begin{equation} \label{eq_hessian_bound}
2(r+1) \int \frac{\rho'}{f^r} \phi' \phi'' \, dx 
+ r(r+1) \int \frac{\rho}{f^r} (\phi'')^2 \, dx 
+ \int \frac{\rho''}{f^r} (\phi')^2 \, dx 
\geq -\lambda \int (\phi')^2 f \, dx.
\end{equation}
This inequality is shown in \cite[Section 4.1]{Iac1} to express a Hessian lower bound for $\mathcal{F}_\rho$ with respect to the $W_2$-structure. Plugging $\rho = e^{-V}$ into the left-hand side, we have
\begin{align*}
2(r+1) \int \frac{\rho'}{f^r} \phi' \phi'' \, dx 
&= 2(r+1) \int -V' \phi' \phi'' \frac{\rho}{f^r} \, dx, \\
\int \frac{\rho''}{f^r} (\phi')^2 \, dx 
&= \int \big((V')^2 - V''\big) (\phi')^2 \frac{\rho}{f^r} \, dx.
\end{align*}
Combining terms, the left-hand side of \eqref{eq_hessian_bound} becomes
\begin{align*}
2(r+1) \int -V' \phi' \phi'' \frac{\rho}{f^r} \, dx 
+ r(r+1) \int (\phi'')^2 \frac{\rho}{f^r} \, dx 
+ \int \big((V')^2 - V''\big) (\phi')^2 \frac{\rho}{f^r} \, dx.
\end{align*}
Using Young's inequality, we bound the first term:
\begin{align*}
2(r+1) \int -V' \phi' \phi'' \frac{\rho}{f^r} \, dx 
&\geq -r(r+1) \int (\phi'')^2 \frac{\rho}{f^r} \, dx 
- \frac{r+1}{r} \int (V')^2 (\phi')^2 \frac{\rho}{f^r} \, dx.
\end{align*}
Thus, the left-hand side of \eqref{eq_hessian_bound} is bounded from below by
$$
-\int \frac{(V')^2}{r} (\phi')^2 \frac{\rho}{f^r} \, dx 
- \int V'' (\phi')^2 \frac{\rho}{f^r} \, dx.
$$
Using the bound $\rho / f^r \leq c^{-(r+1)} f$, we have
$$
-\int \frac{(V')^2}{r} (\phi')^2 \frac{\rho}{f^r} \, dx 
- \int V'' (\phi')^2 \frac{\rho}{f^r} \, dx 
\geq -c^{-(r+1)} \bigg(\frac{L^2}{r} + L\bigg) \int (\phi')^2 f \, dx.
$$
Setting $\lambda = c^{-(r+1)} \big(\frac{L^2}{r} + L\big)$ completes the proof.
\end{proof}

To apply this result in our situation and derive the inequality \eqref{hwi}, we need to prove that the $W_2$-geodesic linking $m$ and $f$ belongs to $\mathcal P_{c',C'}$ for some universal constants $c',C'>0$. This is the goal of the rest of this section.

\begin{lem} \label{lem_ratio_isop}
Assume $V$ is convex and $L$-Lipschitz. There exists a constant $C_V > 0$, depending only on $V$, such that
$$
C_V^{-1} \leq \frac{e^{-V(x)}}{\min\bigg(\int_{-\infty}^x e^{-V(y)} \, dy, \int_x^\infty e^{-V(y)} \, dy\bigg)} \leq C_V.
$$
\end{lem}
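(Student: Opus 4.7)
The plan is to prove the two inequalities separately. The upper bound on the ratio comes cheaply from the Lipschitz assumption, while the lower bound (the Cheeger-type direction) uses the convexity of $V$ together with the integrability of $m$.

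For the upper bound, I would estimate each one-sided tail from below via Lipschitzness. Using $V(y) \leq V(x) + L|y-x|$, one has
$$\int_{-\infty}^x e^{-V(y)}\,dy \geq e^{-V(x)} \int_{-\infty}^x e^{-L(x-y)}\,dy = \frac{e^{-V(x)}}{L},$$
and symmetrically for $\int_x^\infty e^{-V}$. The minimum of the two tails is therefore at least $e^{-V(x)}/L$, so the ratio is at most $L$.

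For the reverse inequality, I would argue by symmetry, reducing to the case $x \geq x_m$, where $x_m$ denotes the median of $m$, so that the minimum equals the right tail. Since $V$ is convex, its right-derivative is non-decreasing, and since $V$ is $L$-Lipschitz, it admits an asymptotic limit $a_+ \in [-L,L]$ at $+\infty$. The integrability of $m$ forces $a_+ > 0$: otherwise $V$ would eventually be non-increasing, contradicting $\int e^{-V}<\infty$. Choose $M$, depending only on $V$, so that $V'(y) \geq a_+/2$ for $y \geq M$. Then for $x \geq M$, convexity gives $V(y) \geq V(x) + (a_+/2)(y-x)$ for $y \geq x$, whence the right tail is at most $2 e^{-V(x)}/a_+$. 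For $x \in [x_m, M]$, continuity of $V$ on the compact interval $[x_m,M]$ yields $e^{-V(x)} \geq \delta$ for some $\delta>0$ depending only on $V$, and the right tail, being at most $1$, is bounded by $\delta^{-1}e^{-V(x)}$. An analogous argument on the other side handles $x \leq x_m$, and taking the maximum of the resulting constants produces $C_V$.

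The main conceptual point is the positivity of the asymptotic slope $a_+$ (and symmetrically $-a_-$), which ties the integrability of $m$ to exponential tail decay under log-concavity; once this is in hand, the rest is a routine case split between the bulk (handled by compactness and continuity) and the tails (handled by convexity plus the asymptotic slope).
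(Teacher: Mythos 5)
Your proof is correct and follows essentially the same strategy as the paper's: a bulk/tail split, with the tail bound $\int_x^\infty e^{-V}\lesssim e^{-V(x)}$ coming from the eventually positive slope forced by convexity plus integrability, and the reverse bound from the Lipschitz constant $L$. The only cosmetic differences are that you compare against explicit exponentials (and handle the bulk via the median and compactness) where the paper integrates $V'e^{-V}$ exactly and invokes local boundedness, and you make explicit the positivity of the asymptotic slope, which the paper leaves implicit.
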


This lemma gives a two-sided bound on the isoperimetric profile of the measure $e^{-V}$. Since $V$ is convex, the lower bound actually only depends on the variance of the measure \cite[Theorem 1]{Bobkov1999}, and does not require the Lipschitz regularity. However, we shall also use the upper bound, which does not hold for every log-concave probability measures (for example, it fails for the standard Gaussian measure). 

\begin{proof}
The function $\frac{e^{-V(x)}}{\min(\cdots)}$ is locally bounded, so it suffices to prove the claim for large $|x|$. Without loss of generality, consider $x \to \infty$ (so that in particular the minimum in the denominator is $\int_x^\infty{e^{-V}}$). 

Let $x_0 > 0$ be such that $\int_{x_0}^\infty e^{-V(y)} \, dy < 1/2$ and $V'(x_0) > 0$. Then, for $x \geq x_0$, since $V'$ is increasing, 
$$
\int_x^\infty e^{-V(y)} \, dy \leq \frac{1}{V'(x_0)} \int_x^\infty V'(y) e^{-V(y)} \, dy = \frac{e^{-V(x)}}{V'(x_0)}.
$$
For the upper bound, since $0 < V'(x) \leq L$ for $x > x_0$, we have
$$
\int_x^\infty e^{-V(y)} \, dy \geq \frac{1}{L} \int_x^\infty V'(y) e^{-V(y)} \, dy = \frac{e^{-V(x)}}{L}.
$$
This proves the desired result.
\end{proof}

\begin{lem} \label{lip_TO_1d}
Assume that $V$ is convex and $L$-Lipschitz, and that $f \in \mathcal{P}_{c,C}$. Then the optimal transport map from $m$ to $f$ satisfies
$$
\frac{c}{CC_V^2} \leq T' \leq \frac{CC_V^2}{c}
$$
where $C_V$ is the constant from Lemma \ref{lem_ratio_isop}. 
\end{lem}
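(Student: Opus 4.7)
The plan is to exploit the fact that in dimension one the optimal transport map from $m$ to $f$ is simply the monotone rearrangement, which admits an explicit formula in terms of cumulative distribution functions. Writing $M(x) := \int_{-\infty}^x m(y)\,dy$ and $F(x) := \int_{-\infty}^x f(y)\,dy$, the map is $T = F^{-1} \circ M$, and differentiation gives
$$
T'(x) = \frac{m(x)}{f(T(x))}.
$$
Since $f \in \mathcal{P}_{c,C}$, i.e. $c\,m \leq f \leq C\,m$, this already reduces the problem to two-sided control of the ratio $m(x)/m(T(x))$ by a constant depending only on $c$, $C$, and $V$.

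To control that ratio, I would first relate $M(T(x))$ to $M(x)$. The defining identity $M(x) = F(T(x))$ combined with $c\,m \leq f \leq C\,m$ integrated on $(-\infty, T(x)]$ immediately yields
$$
\frac{M(x)}{C} \leq M(T(x)) \leq \frac{M(x)}{c},
$$
and the symmetric argument on $[T(x), \infty)$ gives the analogous bound with $M$ replaced by $\bar M := 1-M$. Taking minima, the quantity $\min(M(T(x)), \bar M(T(x)))$ is sandwiched between $c^{-1}$ and $C^{-1}$ times $\min(M(x), \bar M(x))$.

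At this point I would invoke Lemma~\ref{lem_ratio_isop}, which states that $m(y) = e^{-V(y)}$ is comparable, up to the factor $C_V$, to $\min(M(y),\bar M(y))$. Applying this lemma once at $y=x$ and once at $y=T(x)$, and feeding in the comparison between the two minima just established, produces
$$
\frac{c}{C_V^2} \leq \frac{m(x)}{m(T(x))} \leq C\,C_V^2.
$$
Combined with the lower and upper bounds on $f(T(x))/m(T(x))$, this yields the claimed estimate on $T'$. I do not anticipate a serious obstacle: the mildly delicate point is just the bookkeeping of $c$'s and $C$'s when combining the bound on the ratio of minima of tails with the two applications of the isoperimetric comparison from Lemma~\ref{lem_ratio_isop}, but the argument is otherwise direct.
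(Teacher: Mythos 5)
Your proof is correct and follows essentially the same route as the paper: both rest on the explicit one-dimensional structure of the monotone transport together with the two-sided bound of Lemma~\ref{lem_ratio_isop} and the comparability $c\,m \leq f \leq C\,m$ (used both pointwise and in integrated/tail form). The only difference is presentational: the paper reduces the Lipschitz estimate to a comparison of the one-sided isoperimetric profiles by invoking Milman's criterion, whereas you derive the same comparison directly from $T = F^{-1}\circ M$ and the identity $T'(x) = m(x)/f(T(x))$, which makes the argument self-contained and yields the lower bound on $T'$ in the same stroke (the paper gets it by the symmetric argument for $T^{-1}$).
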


The upper bound is a Lipschitz estimate on the optimal transport map $T$, while the lower bound can also be viewed as a Lipschitz bound on the reverse map $T^{-1}$. Lipschitz estimates on optimal transport maps is a well-studied topic, with nonetheless many open problems in the multi-dimensional setting. An influential first result was Caffarelli's contraction theorem \cite{Caff01}, which states that the Brenier map from a standard Gaussian onto a uniformly convex measure is globally Lipschitz. Various extensions have been considered in \cite{Kol, CFJ, CFS}, yet none apply to a multidimensional version of our setting (bounded perturbations of a nice non-compactly supported measure). Indeed, it is shown in \cite[Appendix]{CFJ} that the Brenier map from a standard multivariate Gaussian onto a bounded perturbation of it may fail to be globally Lipschitz. And uniform Lipschitz regularity of the Brenier map from a standard Gaussian onto a log-lipschitz perturbation of it is currently an open problem \cite[Conjecture 1]{FMS}. However, in dimension one, stronger regularity properties are expected, and can be proved using the explicit representation of transport maps using cumulative distribution functions (a tool that is not available in higher dimension). 
 
\begin{proof}
As per the computations in \cite[Section 4.3]{Mil}, to prove that $T$ is Lipschitz (which is the upper bound), it is enough to compare the (one-sided linear) isoperimetric profiles. That is, given a probability density $g$, we define
$$
Is_g(t) = g \circ G^{-1}(t); \quad G(x) = \int_{-\infty}^x g(s) \, ds,
$$
and to get a $C$-Lipschitz estimate for the optimal transport map from $m$ to $f$, it is enough to show that $Is_f \geq C^{-1} Is_m$. From Lemma~\ref{lem_ratio_isop}, we have that 
$$
C_V \min(t, 1-t) \geq Is_m(t) \geq C_V^{-1} \min(t, 1-t).
$$
Moreover, 
\begin{multline*}
\frac{c}{C} \frac{e^{-V(x)}}{\min\left( \int_{-\infty}^x e^{-V(y)} \, dy, \int_x^{\infty} e^{-V(y)} \, dy \right)} \leq \frac{f(x)}{\min\left( \int_{-\infty}^x f(y) \, dy, \int_x^{\infty} f(y) \, dy \right)} \\
\leq \frac{C}{c} \frac{e^{-V(x)}}{\min\left( \int_{-\infty}^x e^{-V(y)} \, dy, \int_x^{\infty} e^{-V(y)} \, dy \right)}.
\end{multline*}
Thus,
$$
\frac{c}{C_V C} \min(t, 1-t) \leq Is_f(t) \leq \frac{C C_V}{c} \min(t, 1-t).
$$
Hence, 
$$
\frac{c}{CC_V^2} Is_m(t) \leq Is_f(t) \leq \frac{CC_V^2}{c} Is_m(t),
$$
and the map $T$ is therefore $CC_V^2/c$-Lipschitz. From a similar argument, we see that the reverse map from $f$ to $m$ is also $CC_V^2/c$-Lipschitz, and hence $T' \geq c/(CC_V^2)$. 
\end{proof}

\begin{lem} \label{bounded_displacement_TO_1d}
Assume that $V$ is convex and that $f \in \mathcal{P}_{c,C}$. Then the optimal transport map from $m$ to $f$ satisfies
\begin{equation}\label{eq:T(x)}
|T(x) - x| \leq A(c, C, V),
\end{equation}
where $A(c, C, V)$ is a computable constant that only depends on $c$, $C$, and $V$.
\end{lem}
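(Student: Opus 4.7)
The plan is to exploit the one-dimensional representation $T = F^{-1} \circ M$, where $M(x) = \int_{-\infty}^x m(y)\,dy$ and $F(x) = \int_{-\infty}^x f(y)\,dy$ are the cumulative distribution functions of $m$ and $f$, and to reduce the problem to a tail comparison that can then be translated into a bound on $V(T(x)) - V(x)$.

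First I would use the hypothesis $cm \le f \le Cm$ to derive tail inequalities. Writing $\bar M := 1-M$ and $\bar F := 1-F$, integration immediately yields $c\bar M(x) \le \bar F(x) \le C\bar M(x)$. Combined with the identity $\bar F(T(x)) = \bar M(x)$, this produces
\[
C^{-1}\bar M(x) \;\le\; \bar M(T(x)) \;\le\; c^{-1}\bar M(x),
\]
with an analogous estimate on the left tail using $M$ in place of $\bar M$. Applying Lemma~\ref{lem_ratio_isop}, which compares $\bar M(x)$ and $e^{-V(x)}$ up to multiplicative constants depending only on $V$, then converts the display above into a uniform estimate
\[
|V(T(x)) - V(x)| \;\le\; K(c,C,V)
\]
valid in the regime where $|x|$ is large enough for Lemma~\ref{lem_ratio_isop} to apply.

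Second I would pass from this bound on $V$ to a bound on $T(x)-x$. Because $V$ is convex and $e^{-V}$ is a probability density, $V'$ is non-decreasing and cannot approach $0$ at $+\infty$ (otherwise the right tail integral $\int_{x_0}^\infty e^{-V(y)}\,dy$ would diverge). Hence there exist $x_0 = x_0(V)$ and $\alpha = \alpha(V)>0$ with $V'(y) \ge \alpha$ for $y \ge x_0$, together with a symmetric statement on the left. The tail inequality above also forces $T(x) \ge x_0$ as soon as $x$ is large enough, with threshold depending only on $V, c, C$. For such $x$ with $T(x) \ge x$, convexity gives $V(T(x)) - V(x) \ge \alpha(T(x) - x)$ and hence $T(x) - x \le K/\alpha$; the case $x \ge T(x) \ge x_0$ is symmetric, and the left tail $x \le -x_0$ is handled identically.

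For the bounded window of $x$ not covered above, monotonicity of the one-dimensional transport map confines $T(x)$ to the interval $[T(-x_0), T(x_0)]$, and the displacement bound at the two endpoints already established in the previous step makes this interval bounded in terms of $V, c, C$ alone. The main obstacle will be the quantitative passage in the second step: one must extract a slope lower bound $\alpha > 0$ depending only on $V$ from convexity plus integrability of $e^{-V}$, which is exactly the one-dimensional log-concave feature being exploited here, and the very feature that degenerates for the standard Gaussian (consistent with the theorem not covering that case).
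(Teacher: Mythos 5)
Your argument is correct, and it reaches the key intermediate estimate by a different mechanism than the paper. The paper's proof goes through the Monge--Amp\`ere equation $e^{-V(x)}=u(T(x))e^{-V(T(x))}T'(x)$ and invokes the Lipschitz bounds on $T'$ from Lemma~\ref{lip_TO_1d} to control $e^{V(T(x))-V(x)}$; you instead use the explicit representation $T=F^{-1}\circ M$, the elementary tail comparison $c\,\bar M\le\bar F\le C\,\bar M$ together with $\bar F(T(x))=\bar M(x)$, and then Lemma~\ref{lem_ratio_isop} to convert $\bar M(T(x))\asymp\bar M(x)$ into $|V(T(x))-V(x)|\le K(c,C,V)$, bypassing the derivative bound on $T$ entirely. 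From that point on the two proofs coincide in spirit: both exploit convexity plus integrability of $e^{-V}$ to get a slope lower bound $V'\ge\alpha>0$ in the right tail (the paper phrases it as $|V(y)-V(x)|\ge\ell_1|y-x|-\ell_2$ on a half-line), and both use monotonicity of $T$ to handle the region not covered by the tail argument (your bounded middle window versus the paper's opposite-sign case). Two small remarks: the condition you need for the first step is not that Lemma~\ref{lem_ratio_isop} "applies" (it holds for all $x$) but that both $x$ and $T(x)$ lie past the median of $m$ so that the minimum in the denominator is the same tail at both points --- which your inequality $\bar M(T(x))\le c^{-1}\bar M(x)$ guarantees once $\bar M(x)\le c/2$; and your route, like the paper's, uses the $L$-Lipschitz hypothesis on $V$ through the upper bound in Lemma~\ref{lem_ratio_isop}, so you lose nothing relative to the paper even though the lemma's statement only mentions convexity. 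Your approach is arguably more self-contained, since it does not need Lemma~\ref{lip_TO_1d}; the paper's approach has the advantage that the Lipschitz bound on $T'$ is needed anyway for the geodesic-confinement proposition, so the Monge--Amp\`ere route reuses an estimate already in hand.
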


\begin{proof}
We write $f = u e^{-V}$. From the Monge-Amp\`ere equation, we have
$$
e^{-V(x)} = u(T(x)) e^{-V(T(x))} T'(x).
$$
Without loss of generality, assume that $V$ attains its minimum at the origin.

Assume first that $x$ and $y$ have the same sign, say $x, y \geq 0$. We claim that there are constants $\ell_1, \ell_2 > 0$ (depending only on $V$) such that
$$
|V(y) - V(x)| \geq \ell_1 |y - x| - \ell_2 \quad \forall x, y \geq 0.
$$
Indeed, there exists $x_0$ and $\ell_1$ such that $V'(x_0) \geq \ell_1 > 0$. If both $x$ and $y$ are greater than $x_0$, the inequality is trivial by monotonicity of $V'$. If both are smaller than $x_0$, it follows by taking $\ell_2$ large enough. If $x > x_0 > y \geq 0$, then
$$
|V(y) - V(x)| = V(x) - V(y) \geq V(x) - V(x_0) \geq \ell_1 |x - x_0| \geq \ell_1 |x - y| - \ell_1 x_0.
$$

Now, distinguish the cases where $T(x) \geq x$ and $T(x) < x$. First assume $T(x) \geq x$. Since $c \leq u \leq C$ and by Lemma~\ref{lip_TO_1d}, $T'(x) \leq CC_V^2/c$, it follows from the Monge-Amp\`ere PDE that $e^{V(T(x)) - V(x)} \leq C^2C_V^2/c$. Since $V(T(x)) \geq V(x)$, we obtain
$$
|T(x) - x| \leq \frac{\ell_2 + \log(C^2C_V^2/c)}{\ell_1}.
$$

If $T(x) \leq x$, using a similar argument we get $e^{V(x) - V(T(x))} \leq \frac{CC_V^2}{c^2}$, and thus
$$
|x - T(x)| \leq \frac{\ell_2 + \log(CC_V^2/c^2)}{\ell_1}.
$$
This argument shows that when $x$ and $T(x)$ are non-negative, $|T(x) - x| \leq A$. The same reasoning can be adapted to cover the case where both $x$ and $T(x)$ are non-positive. The only remaining case is when $x$ and $T(x)$ have opposite signs.

Let us temporarily assume that $T(0) \geq 0$. By monotonicity of the transport, if $x > 0$ then $T(x) > T(0) \geq 0$, so $x$ and $T(x)$ have the same sign, so we are back to the previous case, and we conclude that
$$
|T(x) - x| \leq A \quad \text{for all} \, x \geq 0.
$$
Consider $x' \leq T^{-1}(0)\leq 0$. Then $T(x') \leq T(T^{-1}(0)) = 0$, meaning that $x'$ and $T(x')$ have the same sign. Once again, we are back to the rpevious case, and we obtain
$$
|T(x') - x'| \leq A \quad \text{for all} \, x' \leq T^{-1}(0).
$$
Applying these estimates at $x = 0$ and $x = T^{-1}(0)$, we conclude
\begin{equation}\label{eq:T(0)}
|T(0)| \leq A, \quad |T^{-1}(0)| \leq A.
\end{equation}
Finally, if $x \in [T^{-1}(0), 0]$, then $T(x) \in [0, T(0)]$, and using \eqref{eq:T(0)} we have
$$
|x - T(x)| \leq 2A.
$$
If $T(0) < 0$, we can adapt the argument by distinguishing the three cases $x \leq 0$, $x \geq T^{-1}(0)$ and $x \in [0, T^{-1}(0)]$. 
\end{proof}

\begin{prop}
Assume that $V$ is convex and $L$-Lipschitz. Let $f \in \mathcal{P}_{c,C}$. There exist computable constants $c', C'$ that depend only on $c$, $C$, and $V$ such that the $W_2$-geodesic between $f$ and $m$ remains in $\mathcal{P}_{c', C'}$.
\end{prop}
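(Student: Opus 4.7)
The plan is to parametrize the Wasserstein geodesic explicitly using the optimal transport map, then combine the Lipschitz bound on $T$ from Lemma~\ref{lip_TO_1d} with the displacement bound from Lemma~\ref{bounded_displacement_TO_1d} to control the density of the interpolant against $m$.

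First I would write the geodesic as $f_t = (T_t)_\# m$ with $T_t(x) := (1-t)x + tT(x)$, where $T$ is the monotone optimal transport map from $m$ to $f$. Since $T' > 0$ (indeed, by Lemma~\ref{lip_TO_1d}, $T' \geq c/(CC_V^2) > 0$), $T_t$ is a monotone diffeomorphism of $\mathbb{R}$, and the Monge-Amp\`ere relation gives
$$
f_t(T_t(x)) = \frac{m(x)}{T_t'(x)} = \frac{e^{-V(x)}}{(1-t)+tT'(x)}.
$$
Writing $y = T_t(x)$, so that $x = T_t^{-1}(y)$, this becomes
$$
\frac{f_t(y)}{m(y)} = \frac{\exp\bigl(V(y)-V(T_t^{-1}(y))\bigr)}{(1-t)+tT'(T_t^{-1}(y))}.
$$

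Next I would control the two factors separately. For the denominator, Lemma~\ref{lip_TO_1d} yields $c/(CC_V^2) \leq T' \leq CC_V^2/c$, and since $(1-t)+tT'$ is a convex combination of $1$ and $T'(T_t^{-1}(y))$, it is bounded between $\min(1,c/(CC_V^2))$ and $\max(1,CC_V^2/c)$ uniformly in $t \in [0,1]$ and $y$. For the numerator, note that $y - T_t^{-1}(y) = T_t(x) - x = t(T(x)-x)$, so by Lemma~\ref{bounded_displacement_TO_1d},
$$
|y - T_t^{-1}(y)| \leq |T(x)-x| \leq A(c,C,V),
$$
and the $L$-Lipschitz property of $V$ then gives $|V(y)-V(T_t^{-1}(y))| \leq LA(c,C,V)$.

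Combining these bounds yields
$$
\frac{e^{-LA}}{\max(1,CC_V^2/c)} \leq \frac{f_t(y)}{m(y)} \leq \frac{e^{LA}}{\min(1,c/(CC_V^2))},
$$
for every $t \in [0,1]$ and every $y \in \mathbb{R}$, so we may take $c'$ and $C'$ to be the left and right hand sides respectively; both depend only on $c$, $C$, and $V$. There is no substantial obstacle here beyond bookkeeping, because the three preceding lemmas have already done the real work; the only point that requires a small amount of care is to ensure that $T_t$ is a genuine diffeomorphism so that the change of variables is valid, which is guaranteed by the strict lower bound $T' \geq c/(CC_V^2) > 0$ from Lemma~\ref{lip_TO_1d}.
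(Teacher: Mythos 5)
Your proposal is correct and follows essentially the same route as the paper: parametrize the geodesic by the interpolated transport map, use the Monge--Amp\`ere relation, bound the Jacobian factor $(1-t)+tT'$ via Lemma~\ref{lip_TO_1d}, and control $V(y)-V(T_t^{-1}(y))$ via the Lipschitz property of $V$ together with the displacement bound of Lemma~\ref{bounded_displacement_TO_1d}. Your write-up is in fact slightly more explicit than the paper's (it carries out the change of variables and records the constants $c'$, $C'$), but there is no substantive difference in the argument.
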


\begin{proof}
Let $T$ be the optimal transport map from $m$ to $f$. The $W_2$ geodesic is given by the sequence of densities $f_\alpha$ that are the pushforward of $m$ by $x \mapsto \alpha x + (1-\alpha) T(x)$. Let $u_\alpha = f_\alpha / m$. The goal is to show that the $u_\alpha$ are uniformly bounded from above and below.

We have the Monge-Amp\`ere equation:
$$
e^{-V(x)} = u_{\alpha}((1-\alpha)x + \alpha T(x)) e^{-V((1-\alpha)x + \alpha T(x))} ((1-\alpha) + \alpha T'(x)).
$$
Since $(1-\alpha) + \alpha T'(x)$ is uniformly bounded from above and below (from Lemma~\ref{lip_TO_1d}), it is enough to get uniform bounds on $\exp(V((1-\alpha)x + \alpha T(x)) - V(x))$. Since $V$ is Lipschitz and $T(x) - x$ is uniformly bounded (from Lemma~\ref{bounded_displacement_TO_1d}), this follows immediately. \end{proof}

We now obtain the validity of Proposition~\ref{prop_convexity}. Indeed, by the results above, we know that if $f \in \mathcal P_{c,C}$, then the $W_2$-geodesic linking $m$ and $f$ belongs to $\mathcal P_{c',C'}. $ This allows us to apply Proposition~\ref{prop:convex} with $f_\alpha$ in place of $f$ and $c',C'$ in place of $c,C$ to deduce a uniform lower bound on the $W_2$-Hessian of $\mathcal F_\rho$ at $f_\alpha$, for all $\alpha \in [0,1]$. This proves the desired $(-\lambda)$-convexity, concluding the proof.

\appendix
\section{On the existence and uniqueness of solutions}\label{app:A}
Let $f_0 \in \mathcal{P}_{c,C} := \{g \in \mathcal{P}(\mathbb{R}); \, c m \leq g \leq C m \}$ for some constants $0 < c < C$. 
Our claim is that there exists a unique solution $f(t)$ to \eqref{PDE:f} such that $f(t) \in \mathcal{P}_{c,C}$ for all $t \geq 0.$

This fact has been proved in \cite{IPS} under much weaker assumptions and in arbitrary dimensions on the density $m$, but only in the case of bounded domain. 
While it would be possible to carefully adapt the arguments from \cite{IPS} to our situation, it is easier to argue by approximation.

More precisely, given $k\geq 1$, let consider the potential 
$$
V_k:=\left\{
\begin{array}{ll}
a_k V& \text{in }[-k,k],\\
+\infty& \text{otherwise,}
\end{array}
\right.
$$
where $a_k >0$ is a normalization constant chosen so that $m_k:=e^{-V_k}$ is a probability density. Also, we consider the initial datum
$$
f_{0}^k:=\left\{
\begin{array}{ll}
b_k f_0& \text{in }[-k,k],\\
0& \text{otherwise,}
\end{array}
\right.
$$
where $b_k >0$ is a normalization constant chosen so that $f_{0,k}$ is a probability density. 
Note that $a_k,b_k \to 1$ as $k\to \infty$, hence
$$
f_{0}^k \in \mathcal{P}^k_{c_k,C_k} := \{g \in \mathcal{P}([-k,k]); \, c_k m_k \leq g \leq C_k m_k \},
$$
where $c_k \to c$ and $C_k \to C$ as $k\to \infty$.

Now, since $m_k$ is compactly supported, we can apply \cite[Theorem 1.2]{IPS} to guarantee the existence and uniqueness of a unique solution $f^k(t)\in \mathcal{P}^k_{c_k,C_k}$ starting from $f_0^k$ and solving the PDE corresponding to the equilibrium measure $m_k$, set inside the domain $[-k,k]$, with Neumann boundary conditions.

We now observe that, given any interval $[-R,R]$, for $k>R$ 
we can apply \cite[Lemma 3.3]{IPS} to obtain the compactness of the solutions $f^k(t)$ inside the domain $[-R,R]$.
By a diagonal argument this guarantees that the curve of functions $t\mapsto f^k(t)$ converge (up to a subsequence) to $t\mapsto f(t)\in \mathcal P_{c,C}$, where $f(0)=f_0$ and $f(t)$ solves \eqref{PDE:f}.

Also, the bounds provided by \cite[Theorem 3.5]{IPS} to the solutions $f^k$ imply in the limit that also $f$ satisfy such estimates. Thanks to these bounds, one can apply \cite[Proposition 3.6]{IPS} to prove the $L^1$ contractivity of solutions. In particular, uniqueness holds.

Finally, it is worth noticing that, in our setting, the density $m$ is strictly positive and bounded on compact set, so also $f(t)\in \mathcal P_{c,C}$ is uniformly bounded away from zero and infinity on compact sets. This implies that \eqref{PDE:f} is uniformly parabolic on compact sets, so classical parabolic regularity theory applies to our solution.

\section{Optimal transport} \label{appendix_ot}

We introduce here a few notions and facts from optimal transport theory that are used in this work. 

\begin{defn}[$W_2$ distance]
Let $\mu$ and $\nu$ be two probability measures on $\R^d$ with finite second moment. The Wasserstein distance $W_2$ between $\mu$ and $\nu$ is defined as
$$W_2(\mu, \nu):= \inf_{C(\mu, \nu)} \int{|x-y|^2d\pi}$$
where $C(\mu, \nu)$ is the set of couplings between $\mu$ and $\nu$, that is probability measures on $\R^d \otimes \R^d$ whose first marginal is $\mu$ and second marginal is $\nu$.
\end{defn}

When $\mu$ is absolutely continuous with respect to the Lebesgue measure, Brenier's theorem states that the unique optimal coupling is of the form $(\operatorname{id}, \nabla \varphi) \# \mu$, where $\varphi$ is a convex function. In dimension one, this optimal transport is explicit, and given by $F_{\nu}^{-1} \circ F_\mu$, where $F_\mu$ is the cumulative distribution of $\mu$. 

\begin{defn}[Wasserstein geodesic and convexity]
A Wasserstein geodesic between $\mu_0$ and $\mu_1$ is a curve of probability measures $(\mu_t)_{t \in [0,1]}$ such that $\forall s, t \in [0, 1]$ we have
$$W_2(\mu_s, \mu_t) = |s-t|W_2(\mu_0, \mu_1).$$

A functional $\mathcal{F} : \mathcal{P}_2(\R^d) \longrightarrow \R \cup \{+\infty\}$ is geodesically $\lambda$-convex if it is $\lambda$-convex along any $W_2$ geodesic. 
\end{defn}

If $\pi$ is an optimal coupling between $\mu$ and $\nu$, then the pushforward of $\pi$ by $(x,y) \longrightarrow (1-t)x + ty$ induces a curve of probability measures that is a $W_2$-geodesic. In particular, when $\mu$ is absolutely continuous, the unique geodesic is given by the pushforward of $\mu$ by $(x,y) \longrightarrow (1-t)x + t\nabla \varphi(x)$, where $\nabla \varphi$ is the Brenier map. 

In practice, Otto calculus \cite{Ott} allows to compute (in a formal way) the second derivative of a functional along a $W_2$ geodesic, allowing to check if a (nice enough) functional is $\lambda$-convex. It is this calculus that leads to checking $\lambda$-convexity via \eqref{eq_hessian_bound} in \cite{Iac1}. 

\section*{Acknowledgments}
M. Fathi has received support under
the program "Investissement d'Avenir" launched by the French Government
and implemented by ANR, with the reference ANR-18-IdEx-0001
as part of its program "Emergence".  He was also supported by the Agence Nationale de la Recherche (ANR) Grant ANR-23-CE40-0003 (Project CONVIVIALITY). M. Iacobelli acknowledges the support of the NSF Grant DMS-1926 and the SNSF Starting Grant TMSGI2\_226018.

\end{document}